\newcommand{\ent}{H_{\nu}}
\newcommand{\kl}{H}
\newcommand{\tilphi}{\widetilde{\phi}}
\newcommand{\tilpi}{\widetilde{\pi}}
\newcommand{\tilrho}{\widetilde{\rho}}
\theoremstyle{plain}
\newtheorem*{examples*}{Examples}
\title{A gradient descent perspective on Sinkhorn}
\date{\today}
\author{Flavien Léger}
\email{flavienleger@nyu.edu}
\begin{document}

\keywords{Sinkhorn algorithm, gradient descent, mirror descent, sublinear convergence rate}

\begin{abstract}
We present a new perspective on the popular Sinkhorn algorithm, showing that it can be seen as a Bregman gradient descent (mirror descent) of a relative entropy (Kullback--Leibler divergence). This viewpoint implies a new sublinear convergence rate with a robust constant.
\end{abstract}

\maketitle

\tableofcontents

\section{Introduction}
The Sinkhorn algorithm has been used to solve matrix scaling problems~\cite{yule,kruithof,demingstephan,bacharach} and in particular regularized optimal transport problems~\cite{wilson,erlander1980,erlanderstewart,galichonsalanie,cuturilightspeed}. Its convergence was studied in~\cite{sinkhorn1964,ruschendorf} and rates of convergence were first established in~\cite{franklinlorenz}.

The Sinkhorn algorithm can be seen as a solver for the minimum entropy problem
\[
H^*(\mu,\nu,R) = \inf_{\pi\in\Pi(\mu,\nu)} H(\pi|R),
\]
where $(X,\mu)$ and $(Y,\nu)$ are two probability spaces, $R$ is a measure on $X\times Y$ and $\Pi(\mu,\nu)$ denotes the space of probability measures on $X\times Y$ (sometimes called couplings or plans) having $X$-marginal $\mu$ and $Y$-marginal $\nu$. Moreover, $H$ is the relative entropy (also known as Kullback--Leibler divergence) defined by $H(\pi|R) = \iint\ln(\pi/R)\,\pi$. The Sinkhorn method constructs a sequence of couplings $\pi_0, \pi_{\frac{1}{2}}, \pi_1, \dots$ by alternative projections onto couplings with $Y$-marginal $\nu$ (these are $\pi_{\frac{1}{2}}, \pi_{\frac{3}{2}}$, etc) and couplings with $X$-marginal $\mu$ ($\pi_{1}, \pi_{2}$, etc).

Our contribution is a new perspective on the Sinkhorn algorithm. We show that it can be seen as a Bregman gradient descent (mirror descent) of the relative entropy $\rho\mapsto H(\rho|\nu)=\int\ln(\rho/\nu)\rho$. This allows us to derive a new sublinear convergence rate
\[
\kl(\rho_n|\nu) \le \frac{H^*(\mu,\nu,R)}{n},
\]
where $\rho_n$ is the $Y$-marginal of the iterate $\pi_n$. Contrary to all previously known global rates, our result features a robust constant $H^*$ which is always finite. In particular this new rate can be used for general reference measures $R$, without needing lower or upper bounds on the entries of $R$. We also obtain a new bound
\[
\kl(\rho_n|\nu) \le \frac{M_2(\mu)+M_2(\nu)}{n \,\eps }
\]
in the regularization of quadratic optimal transport 
\[
\inf_{\pi\in\Pi(\mu,\nu)} \iint \frac{1}{2}\abs{x-y}^2\,\pi(dx,dy) + \eps H(\pi|\mu\otimes \nu).
\]
Here $\eps>0$ and $M_2$ denotes second moments. This is of particular interest in the limit $\eps\to 0$.

In regard to convergence rates of the Sinkhorn algorithm, linear rates were obtained in~\cite{franklinlorenz} by using the so-called Hilbert projective metric. This elegant approach yields for instance bounds of the form $H(\rho_n|\nu) \lesssim \lambda^n$ for a constant $\lambda\in (0,1]$. In a large number of situations $\lambda$ is away from $1$ and this rate is much stronger than our new sublinear rate. However all the known linear rates deteriorate when the reference measure $R$ (i.e. the ``matrix'' we wish to scale) contains large or small (nonnegative) entries. As an example $\lambda=1$ if $R$ contains zero entries, in which case the linear rate is unusable. Therefore there is a dichotomy where either $R$ has good lower and upper bounds, in which case fast linear rates exist, or $R$ contains small or large values, in which case Hilbert metric theory might not even imply convergence of the iterates. Note that in many cases one might be interested in scaling matrices $R$ which contain many zeros, or are even sparse. Our new results remedy this situation by providing a convergence rate which is applicable to any problem.

Let us now mention some related works. In~\cite{altschuler,chakrabarty,dvurechensky} the authors derive sublinear estimates for the relative entropy $H(\nu|\rho_n)$. Our result improves on these estimates by obtaining an inequality $\kl(\rho_n|\nu) \le H^* / n$, and more importantly by identifying the robust constant $H^*$. Indeed the constants appearing in these papers all contain a $-\ln(\min_{ij}R_{ij})$ term which blows up as $\min_{ij}R_{ij}\to 0$ (their setting is finite-dimensional). In~\cite{mishchenko} a different mirror descent approach to the Sinkhorn scheme was presented: their proposed alternating mirror descent seemingly doesn't allow to derive convergence results.
In~\cite{mensch} an online variant of the Sinkhorn scheme is introduced using a block-convex stochastic mirror
descent method.

\pagebreak

\section{Background}\label{sec:background}
\subsection{Bregman divergences} \label{sec:bregmandiv}

\begin{definition}
Consider a differentiable function $F\colon\RN\to\R$. Its \emph{Bregman divergence} is defined by
\[
F(\phi_2|\phi_1) = F(\phi_2) - F(\phi_1) - \bracket{F'(\phi_1),\phi_2-\phi_1},
\]
for any $\phi_1,\phi_2\in\RN$. Here $F'$ denotes the derivative (or gradient) of $F$, i.e. the vector $F'(\phi)=(\partial_{i}F(\phi))_i$, and $\bracket{\cdot,\cdot}$ is the usual dot product.
\end{definition}

Let us gather below some well-known results in the theory of Bregman divergences.

\begin{prop}\label{prop:bregmandivergence}
Let $F\colon\RN\to\R$ be a convex and differentiable function, and denote by $F^*$ its convex conjugate $F^*(\rho)=\sup_{\phi} \bracket{\phi, \rho} -F(\phi)$. Then
\begin{enumerate}[(i)]

\item \label{prop:bregmandivergence:switch} $F(\phi_2|\phi_1)=F^*(\rho_1|\rho_2)$ for all $\phi_1,\phi_2\in\RN$, where we set $\rho_i=F'(\phi_i)$. 

\item \label{prop:bregmandivergence:chainrule} Fix $a\in\RN$ and define $F_a(\phi)=F(\phi|a)$. Then $F_a(\phi_2|\phi_1)=F(\phi_2|\phi_1)$. 
\end{enumerate}
\end{prop}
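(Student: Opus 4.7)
The plan for part \ref{prop:bregmandivergence:switch} is to use Legendre--Fenchel duality. Since $F$ is convex and differentiable, at any $\phi,\rho$ with $\rho=F'(\phi)$ we have the pair of identities
\[
F(\phi)+F^*(\rho)=\bracket{\phi,\rho},\qquad \phi=(F^*)'(\rho).
\]
I would substitute $F(\phi_i)=\bracket{\phi_i,\rho_i}-F^*(\rho_i)$ and $F'(\phi_1)=\rho_1$ into the definition of $F(\phi_2|\phi_1)$ and regroup. The $F$ contributions are traded for $F^*$ contributions, and after the cancellations of $\bracket{\phi_2,\rho_2}$ against the mixed inner products, the remaining linear piece takes the form $-\bracket{\phi_2,\rho_1-\rho_2}=-\bracket{(F^*)'(\rho_2),\rho_1-\rho_2}$, which is exactly the linear piece of $F^*(\rho_1|\rho_2)$. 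The terms $F^*(\rho_1)-F^*(\rho_2)$ complete the identification.

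For part \ref{prop:bregmandivergence:chainrule}, the observation is that $F_a$ differs from $F$ by the affine function $\phi\mapsto F(a)+\bracket{F'(a),\phi-a}$, and Bregman divergences are insensitive to affine perturbations. Concretely, $F_a'(\phi)=F'(\phi)-F'(a)$, so when one expands $F_a(\phi_2|\phi_1)$ the constants $-F(a)$ cancel in $F_a(\phi_2)-F_a(\phi_1)$, and the affine contribution $-\bracket{F'(a),\phi_2-\phi_1}$ appearing in that same difference cancels against the $+\bracket{F'(a),\phi_2-\phi_1}$ coming from the $-F'(a)$ piece of $F_a'(\phi_1)$. What survives is precisely $F(\phi_2|\phi_1)$.

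I do not anticipate any serious obstacle in either step---both reduce to algebraic identities. The only point requiring mild care is the use of $(F^*)'(\rho_2)=\phi_2$ in part \ref{prop:bregmandivergence:switch}, which rests on the Legendre inversion theorem and is transparent when $F$ is strictly convex; in the purely convex case one can still read $\phi_2$ as an element of the subdifferential $\partial F^*(\rho_2)$, which is all that the computation actually uses.
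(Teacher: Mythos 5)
Your proof is correct. Note that the paper states this proposition without proof, presenting it as a list of well-known facts, so there is no in-paper argument to compare against; your two computations---the Fenchel--Young equality $F(\phi_i)+F^*(\rho_i)=\langle\phi_i,\rho_i\rangle$ combined with the inversion $(F^*)'(\rho_i)=\phi_i$ for part (i), and the invariance of Bregman divergences under affine perturbations for part (ii)---are the standard arguments and go through exactly as you describe. Your closing caveat, reading $\phi_2$ as an element of the subdifferential $\partial F^*(\rho_2)$ when $F$ is not strictly convex, correctly addresses the one real subtlety (differentiability of $F^*$) that the statement itself glosses over.
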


\subsection{Bregman gradient descent}\label{sec:gradientdescent}
Consider a differentiable function $H\colon\RN\to\R$ that we wish to minimize without constraints. Let $G\colon\RN\to\R$ be a differentiable strictly convex function which will be used as a movement limiter. The gradient descent iteration with a Bregman divergence based on $G$, also called \emph{mirror descent}~\cite{nemirovskyyudin,beckteboullemirror}, takes the form
\begin{equation} \label{eq:generalgradientdescent}
\rho_{n+1} = \argmin_\rho H(\rho_n) +  \bracket{H'(\rho_n),\rho-\rho_n} + G(\rho|\rho_n),
\end{equation}
for all $n\ge 0$, where $H'$ denotes the derivative of $H$ (see previous section). The optimality conditions are given by 
\[
G'(\rho_{n+1}) - G'(\rho_n) = -H'(\rho_n).
\]
The following result gathers well-known facts in first-order optimization theory.

\begin{theorem}[Unconstrained gradient descent]\label{thm:gradientdescent}
Consider the gradient descent method~\eqref{eq:generalgradientdescent} under the previous hypotheses on $H$ and $
G$. 
\begin{enumerate}[i)]
\item If the objective function is dominated by the movement limiter, i.e. $H(\tilrho|\rho)\le G(\tilrho|\rho)$ for all $\rho,\tilrho$, then we have the descent property
\[
H(\rho_{n+1}) \le H(\rho_n) - G(\rho_n|\rho_{n+1}),
\]
for all $n\ge 0$.

\item If in addition $H$ is convex then we have the convergence rate $H(\rho_n) \le \inf_{\rho} H(\rho) + \frac{G(\rho|\rho_0) }{n}$, for all $n\ge 1$. Therefore if $H$ admits a minimizer $\nu$ then 
\[
H(\rho_n)-H(\nu) \le \frac{G(\nu|\rho_0) }{n}.
\]

\end{enumerate}
\end{theorem}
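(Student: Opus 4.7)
The plan is to combine three ingredients: the definition of the Bregman divergence, the optimality condition $G'(\rho_{n+1})-G'(\rho_n) = -H'(\rho_n)$ satisfied by the minimizer in~\eqref{eq:generalgradientdescent}, and the following ``three-point identity''
\[
\bracket{H'(\rho_n),\,\rho_{n+1}-\rho} = G(\rho|\rho_n) - G(\rho|\rho_{n+1}) - G(\rho_{n+1}|\rho_n),
\]
valid for every $\rho\in\RN$. This identity is obtained by expanding the three Bregman divergences of $G$ that appear on the right-hand side and substituting the optimality condition; it is the main computational tool.

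For part (i), I would start from the exact formula
\[
H(\rho_{n+1}) = H(\rho_n) + \bracket{H'(\rho_n),\,\rho_{n+1}-\rho_n} + H(\rho_{n+1}|\rho_n),
\]
which is just the definition of the Bregman divergence of $H$. The hypothesis $H(\tilrho|\rho)\le G(\tilrho|\rho)$ bounds the last term by $G(\rho_{n+1}|\rho_n)$, while specializing the three-point identity to $\rho=\rho_n$ evaluates the middle term as $-G(\rho_n|\rho_{n+1}) - G(\rho_{n+1}|\rho_n)$. The two $G(\rho_{n+1}|\rho_n)$ contributions then cancel, leaving the advertised descent $H(\rho_{n+1})\le H(\rho_n) - G(\rho_n|\rho_{n+1})$. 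In particular, the sequence $H(\rho_n)$ is nonincreasing because $G(\rho_n|\rho_{n+1})\ge 0$ by convexity of $G$; this monotonicity will be needed in (ii).

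For part (ii), fix an arbitrary $\rho$ and use convexity of $H$ to write
\[
H(\rho_n) - H(\rho) \le \bracket{H'(\rho_n),\,\rho_n - \rho}.
\]
Splitting the right-hand side as $\bracket{H'(\rho_n),\rho_n-\rho_{n+1}} + \bracket{H'(\rho_n),\rho_{n+1}-\rho}$ and applying the two corresponding instances of the three-point identity yields
\[
H(\rho_n) - H(\rho) \le G(\rho_n|\rho_{n+1}) + G(\rho|\rho_n) - G(\rho|\rho_{n+1}).
\]
Using (i) to bound the first summand by $H(\rho_n)-H(\rho_{n+1})$ and cancelling, I obtain the one-step inequality $H(\rho_{n+1}) - H(\rho) \le G(\rho|\rho_n) - G(\rho|\rho_{n+1})$. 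A telescoping sum from $k=0$ to $n-1$, the nonnegativity of $G(\rho|\rho_n)$, and the monotonicity of $k\mapsto H(\rho_k)$ from (i) then produce $n\,(H(\rho_n)-H(\rho)) \le G(\rho|\rho_0)$, which is the desired bound after dividing by $n$ and specializing to $\rho=\nu$.

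No step is genuinely difficult; the only delicate part is the bookkeeping for the three-point identity. The real conceptual content is that this identity translates the inner product against the gradient $H'(\rho_n)$ into pure Bregman-divergence language involving only $G$, and it is precisely this translation that lets the movement limiter $G$ dominate the linearization of $H$ and makes the telescoping argument in (ii) go through.
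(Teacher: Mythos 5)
Your proof is correct and is essentially the paper's own argument in different packaging: the three-point identity you derive from the optimality condition is exactly what the paper obtains by writing $\xi(\rho_{n+1})=\xi(\rho)-\xi(\rho|\rho_{n+1})$ for the model function $\xi(\rho)=H(\rho_n)+\bracket{H'(\rho_n),\rho-\rho_n}+G(\rho|\rho_n)$ and identifying $\xi(\rho|\rho_{n+1})=G(\rho|\rho_{n+1})$. All the essential ingredients coincide --- the optimality condition, the domination hypothesis used to absorb $H(\rho_{n+1}|\rho_n)$, the one-step inequality $H(\rho_{n+1})-H(\rho)\le G(\rho|\rho_n)-G(\rho|\rho_{n+1})$, and the telescoping combined with the monotonicity from part (i).
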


We include for the reader's convenience a short proof of Theorem~\ref{thm:gradientdescent}.

\begin{proof}
Fix an iteration $n\ge 0$ and define the convex function
\[
\xi(\rho)=H(\rho_n)+\bracket{H'(\rho_n),\rho-\rho_n}+G(\rho|\rho_n).
\]
By~\eqref{eq:generalgradientdescent} we have $\xi'(\rho_{n+1})=0$ and therefore  $\xi(\rho_{n+1}) = \xi(\rho)-\xi(\rho|\rho_{n+1})$ for any $\rho$. Next we use the bound $H(\rho_{n+1}|\rho_n)\le G(\rho_{n+1}|\rho_n)$ together with the identity  $\xi(\rho) = H(\rho)-H(\rho|\rho_n)+G(\rho|\rho_n)$ evaluated at $\rho=\rho_{n+1}$. We obtain
\[
H(\rho_{n+1}) \le \xi(\rho_{n+1}) = \xi(\rho)-\xi(\rho|\rho_{n+1}), 
\]
for any $\rho$. We use this inequality to prove points i) and ii). 
\begin{enumerate}[i)]
\item 
Take $\rho=\rho_n$. By Prop.~\ref{prop:bregmandivergence}\ref{prop:bregmandivergence:chainrule} and linearity of the Bregman divergence operation we have $\xi(\rho_n|\rho_{n+1})=0+0+G(\rho_n|\rho_{n+1})$. Therefore $H(\rho_{n+1})\le \xi(\rho_n)-\xi(\rho_n|\rho_{n+1})=H(\rho_n)-G(\rho_n|\rho_{n+1})$. 
\item
Convexity on $H$ now implies the upper bound $\xi(\rho) \le H(\rho)+G(\rho|\rho_n)$. Therefore $H(\rho_{n+1}) \le H(\rho)+G(\rho|\rho_n)-\xi(\rho|\rho_{n+1})=H(\rho)+G(\rho|\rho_n)-G(\rho|\rho_{n+1})$. To conclude, sum this last inequality from $0$ to $n-1$ and use decrease of $H(\rho_n)$ to obtain $n H (\rho_n)\le nH(\rho)+G(\rho|\rho_0)-G(\rho|\rho_n) \le nH(\rho)+G(\rho|\rho_0)$.
\end{enumerate}
\end{proof}

\subsection{Entropic regularization of optimal transport}
Let $(X,\mu)$ and $(Y,\nu)$ be two probability spaces and consider a cost function $c\colon X\times Y\to\R$. We are interested in the regularized optimal transport problem~\cite{galichonsalanie,cuturilightspeed,peyrecuturibook}
\begin{equation}\label{eq:entropicminimization}
\inf_{\pi\in\Pi(\mu,\nu)} \iint_{X\times Y} c(x,y)\,\pi(dx,dy) + \eps \iint_{X\times Y} \ln\Big(\frac{\pi(dx,dy)}{\mu(dx)\nu(dy)}\Big)\,\pi(dx,dy),
\end{equation}
where $\eps>0$. Here $\Pi(\mu,\nu)$ denotes the set of couplings $\pi$ having $X$-marginal $\mu$ and $Y$-marginal $\nu$, i.e. $\int_Y\pi(dx,dy)=\mu(dx)$ and $\int_X\pi(dx,dy)=\nu(dy)$. The above problem can be written as
\begin{equation*}
\inf_{\pi\in\Pi(\mu,\nu)} \eps H(\pi|R),
\end{equation*}
by defining $R(dx,dy) = e^{-c(x,y)/\eps}\mu(dx)\nu(dy)$. The relative entropy is defined by $H(\pi|R) = \iint_{X\times Y} \ln(\pi/R)\,d\pi$ when $\pi$ is absolutely continuous with respect to $R$, and $\infty$ otherwise. We will focus primarily on the dual formulation of~\eqref{eq:entropicminimization}, which takes the form
\[
\eps\,\sup_{\phi,\psi} \int_Y \phi(y)\,\nu(dy) + \int_X \psi(x)\,\mu(dx) - \ln\left(\iint_{X\times Y} \!\!\!\!\!e^{\phi(y)+\psi(x)}\,R(dx,dy)\right).
\]

The supremum is here taken over functions $\phi\colon Y\to\R$ and $\psi\colon X\to\R$.

\section{Sinkhorn as a gradient descent method}

\subsection{Definitions and notations
}
Let $(X,\mu)$ and $(Y,\nu)$ be two probability spaces, and $R$ a reference measure on $X\times Y$ (note that we do not assume that $R$ has necessarily mass $1$). Define the dual functional
\[
D(\phi,\psi)=\bracket{\phi,\nu} - \bracket{\psi,\mu} - \ln\Big(\iint e^{\phi(y)-\psi(x)}R(dx,dy)\Big),
\]
over functions $\phi\colon Y\to\R$ and $\psi\colon X\to\R$. 
Here $\bracket{\phi,\nu}=\int_Y\phi\,d\nu$ and $\bracket{\psi,\mu}=\int_X\psi\,d\mu$.

The Sinkhorn method can be seen as an iterative solver for the optimization problem $\sup_{\phi,\psi} D(\phi,\psi)$ (note that this is a concave maximization problem). We can write the Sinkhorn iterations concisely by making use of two transforms $\phi\to\phi^+$ and $\psi\to\psi^-$, defined by 
\begin{equation}\label{eq:deftransforms}
\begin{aligned}
\phi^+(x) &= \ln\Big(\int_Y e^{\phi(y)}\,R(dx,dy)/\mu(dx)\Big),\\
\psi^{-}(y) &= -\ln\left(\int_X e^{-\psi(x)}\,R(dx,dy)/\nu(dy)\right).
\end{aligned}
\end{equation}
The fractions in the expression above should be interpreted in the Radon--Nikodym sense and are assumed to be well-defined. Then, the Sinkhorn iteration takes the form
\begin{equation}\label{eq:sinkhorniteration}
\phi_{n+1}=(\phi_n)^{+-}.
\end{equation}
Note that the ``$+$''-transform maps a potential $\phi$ defined on $Y$ to a potential $\phi^+$ defined on $X$ (and vice versa for the ``$-$''-transform). These two transformations play similar roles as the $c$-transforms from optimal transport~\cite{Villanibook2}. 

To each pair of potential $(\phi,\psi)$ is associated a primal quantity: the coupling or ``plan''
\begin{equation} \label{eq:defcoupling}
\pi(\phi,\psi)(dx,dy) = Z^{-1} e^{\phi(y)-\psi(x)}R(dx,dy),
\end{equation}
where the scalar 
$Z=\iint e^{\phi(y)-\psi(x)}R(dx,dy)$
ensures that the measure $\pi(\phi,\psi)$ has mass $1$. In this paper we will consider plans $\pi(\phi,\phi^+)$ or $\pi(\psi^-,\psi)$ and therefore $Z$ will always be $1$. 

In order to relate potentials and densities let us first define
\[
J(\phi) = \sup_{\psi} D(\phi,\psi),
\]
where the supremum is taken over all function $\psi\colon X\to\R$. Since $D$ is concave, $J$ is easily seen to be concave as well. Second, we define a functional $F$ by $J(\phi) = \bracket{\phi,\nu} - F(\phi)$. Written more explicitly, we have
\[
F(\phi)=\bracket{\phi^+\!,\mu}.
\]
The important role played by the convex functional $F$ lies in its derivative $F'$ which is a bridge between potentials and densities, as shown by the following result.
\begin{lemma}\label{lemma:derivativeF}
Consider a potential $\phi\colon Y\to\R$. Then $F'(\phi) = p_Y\pi(\phi,\phi^+)$,where $p_Y$ denotes the $Y$-marginal projection. In other words,
\[
F'(\phi)(dy)=\int_X e^{\phi(y)-\phi^+(x)}\,R(dx,dy).
\]
The $X$-marginal (sum along the rows) of $\pi(\phi,\phi^+)$ is always $\mu$.
\end{lemma}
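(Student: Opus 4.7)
The plan is to compute $F'(\phi)$ directly from the defining formula $F(\phi)=\int_X \phi^+(x)\,\mu(dx)$, and then to separately verify that the measure $e^{\phi(y)-\phi^+(x)}R(dx,dy)$ is already a probability coupling with the correct $X$-marginal, so the normalizing constant $Z$ in~\eqref{eq:defcoupling} equals $1$.

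First I would differentiate $F$ in an arbitrary direction $h\colon Y\to\R$. Rewriting the definition of $\phi^+$ as the measure identity
\[
e^{\phi^+(x)}\mu(dx)=\int_{Y}e^{\phi(y)}\,R(dx,dy),
\]
I compute
\[
\frac{d}{dt}\bigg|_{t=0}F(\phi+th)=\int_X \frac{\int_Y h(y)\,e^{\phi(y)}\,R(dx,dy)}{\int_Y e^{\phi(y)}\,R(dx,dy)}\,\mu(dx) = \int_X\int_Y h(y)\,e^{\phi(y)-\phi^+(x)}\,R(dx,dy),
\]
where the second equality uses the measure identity above to cancel $e^{\phi^+(x)}\mu(dx)$ in the denominator. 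Applying Fubini identifies $F'(\phi)$ with the claimed expression $F'(\phi)(dy)=\int_X e^{\phi(y)-\phi^+(x)}\,R(dx,dy)$.

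Next I would verify that the measure $\eta(dx,dy):=e^{\phi(y)-\phi^+(x)}\,R(dx,dy)$ has $X$-marginal $\mu$. Integrating out $y$ and using the same measure identity,
\[
\int_Y \eta(dx,dy) = e^{-\phi^+(x)}\int_Y e^{\phi(y)}\,R(dx,dy) = e^{-\phi^+(x)}\,e^{\phi^+(x)}\mu(dx) = \mu(dx).
\]
This immediately gives the normalization $Z=\iint \eta = \mu(X)=1$, so the $Z^{-1}$ in~\eqref{eq:defcoupling} is indeed trivial. In particular the $X$-marginal statement of the lemma follows, and $\pi(\phi,\phi^+)=\eta$ itself.

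Finally, the $Y$-marginal of $\pi(\phi,\phi^+)=\eta$ is obtained by integrating out $x$, which yields exactly $\int_X e^{\phi(y)-\phi^+(x)}\,R(dx,dy)=F'(\phi)(dy)$ from the first step, proving $F'(\phi)=p_Y\pi(\phi,\phi^+)$. I do not anticipate any serious obstacle: the only subtlety is bookkeeping with Radon--Nikodym derivatives when interpreting $\phi^+$, and one should probably add the mild regularity assumption that differentiation under the integral is justified (e.g.\ $h$ bounded). The key identity that does all the work is $e^{\phi^+(x)}\mu(dx)=\int_Y e^{\phi(y)}R(dx,dy)$, which is simply the definition of the $+$-transform rephrased at the level of measures.
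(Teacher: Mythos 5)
Your proposal is correct and follows essentially the same route as the paper, whose proof is just a one-line pointer to ``an elementary computation of derivative'' from the explicit formula $F(\phi)=\int_X \ln\big(\int_Y e^{\phi(y)}R(dx,dy)/\mu(dx)\big)\,\mu(dx)$. You simply carry out that computation in full, and additionally verify the $Z=1$ normalization and the $X$-marginal claim that the paper leaves implicit, which is exactly the right bookkeeping.
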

In the above lemma and in the rest of this note we use $F'$ to denote the derivative (or first variation) of $F$. It is defined for instance by $F'(\phi)h=\lim_{\eps\to 0} \big(F(\phi+\eps h)-F(\phi)\big)/\eps$. 

\begin{proof}[Proof of Lemma~\ref{lemma:derivativeF}]
The result of this lemma can be obtained from an elementary computation of derivative, using the expression
\[
F(\phi)=\bracket{\phi^+,\mu}=\int_X \ln\Big(\int_Y e^{\phi(y)}R(dx,dy) / \mu(dx)\Big)\,\mu(dx).
\]
\end{proof}

\subsection{Main results}
We recall that $(X,\mu)$ and $(Y,\nu)$ are two probability spaces and that $R$ is a measure on $X\times Y$. In the previous section, we defined two transformations ``$+$'' and ``$-$'' by~\eqref{eq:deftransforms}, a coupling function $\pi(\phi,\psi)$ by~\eqref{eq:defcoupling} and a functional $F(\phi)=\bracket{\phi^+\!,\mu}$.

Our starting point is the following observation, already present in~\cite{berman}
\begin{lemma}[\cite{berman}]\label{prop:Sinkhorndensities}
the Sinkhorn iteration $\phi_{n+1}=(\phi_n)^{+-}$ can be written as
\begin{equation}\label{eq:sinkhorniterationrho}
\phi_{n+1}-\phi_n = -\ln(\rho_n / \nu),
\end{equation}
where $\rho_n$ denotes the probability measure associated with $\phi_n$, i.e. $\rho_n=F'(\phi_n)$ (see Lemma~\ref{lemma:derivativeF}).
\end{lemma}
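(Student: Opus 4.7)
The proof is essentially a direct unfolding of the definitions, the key input being the formula for $F'(\phi)$ from Lemma~\ref{lemma:derivativeF}. The plan is as follows.

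First, I would expand $\phi_{n+1}(y)=((\phi_n)^+)^{-}(y)$ using the definition of the ``$-$''-transform in~\eqref{eq:deftransforms}:
\[
\phi_{n+1}(y) = -\ln\Big(\int_X e^{-(\phi_n)^+(x)}\,R(dx,dy)/\nu(dy)\Big).
\]
So everything reduces to identifying the integral $\int_X e^{-(\phi_n)^+(x)}\,R(dx,dy)$.

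Second, I would invoke Lemma~\ref{lemma:derivativeF}, which gives
\[
\rho_n(dy) = F'(\phi_n)(dy) = \int_X e^{\phi_n(y)-(\phi_n)^+(x)}\,R(dx,dy) = e^{\phi_n(y)}\int_X e^{-(\phi_n)^+(x)}\,R(dx,dy),
\]
since $e^{\phi_n(y)}$ only depends on $y$ and may be pulled out of the $X$-integral. Rearranging,
\[
\int_X e^{-(\phi_n)^+(x)}\,R(dx,dy) = e^{-\phi_n(y)}\,\rho_n(dy).
\]

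Finally, substituting this into the first display yields
\[
\phi_{n+1}(y) = -\ln\bigl(e^{-\phi_n(y)}\,\rho_n(dy)/\nu(dy)\bigr) = \phi_n(y) - \ln(\rho_n/\nu)(y),
\]
which is exactly~\eqref{eq:sinkhorniterationrho}. No step is really delicate; the only mild subtlety is keeping track of the fact that the Radon--Nikodym quotients in the definitions of the ``$+$'' and ``$-$'' transforms are well-defined, but that has already been assumed in the setup of Section~\ref{sec:background}. So I do not expect any genuine obstacle; this is really a one-line verification once Lemma~\ref{lemma:derivativeF} is available.
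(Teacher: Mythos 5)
Your proof is correct and follows essentially the same route as the paper: both unfold the ``$-$''-transform in the definition of $\phi_{n+1}=((\phi_n)^+)^-$ and combine it with the marginal formula $\rho_n(dy)=\int_X e^{\phi_n(y)-(\phi_n)^+(x)}\,R(dx,dy)$ from Lemma~\ref{lemma:derivativeF}; the only difference is cosmetic (you solve for $\phi_{n+1}$ and substitute, while the paper rewrites $\rho_n$ by recognizing the transform).
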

\begin{proof}
Let $n\ge 0$ and consider a Sinkhorn iterate $\phi_n$. Set $\psi_n=(\phi_n)^+$, so that $\phi_{n+1}=(\psi_n)^-$. Let $\rho_n$ denote the $Y$-marginal of $\pi(\phi_n,\psi_n)$, i.e.
\[
\rho_n(dy) = \int_X e^{\phi_n(y)-\psi_{n}(x)}\,R(dx,dy).
\]
This can be written, recognizing the ``$-$''-transform, as
\[
\rho_n(dy)=e^{\phi_n(y)-\phi_{n+1}(y)}\,\nu(dy),
\]
which implies the desired equality.
\end{proof}

Then the main result of this paper says that the Sinkhorn iteration~\eqref{eq:sinkhorniterationrho} can be seen as a gradient descent method of a relative entropy (Kullback--Leibler divergence):

\begin{theorem} \label{thm:main}
Let $\pi_n=\pi(\phi_n,\phi_n^+)$ be the coupling produced by the Sinkhorn iteration $\phi_n\to\phi_{n+1}$ (see~\eqref{eq:sinkhorniteration} and~\eqref{eq:sinkhorniterationrho}) and denote by $\rho_n$ its $Y$-marginal (we recall that the $X$-marginal of $\pi_n$ is always $\mu$). Then the Sinkhorn scheme can be seen as the gradient descent
\begin{equation}\label{eq:bregmangradientdescent}
(F^*)'(\rho_{n+1}) - (F^*)'(\rho_n) = -\, H'_{\nu}(\rho_n).
\end{equation}
Here $H_{\nu}(\rho) = H(\rho|\nu)=\int\ln(\rho/\nu)\rho$ denotes the relative entropy of $\rho$ with respect to $\nu$, and $F^*$ is the convex conjugate of $F$. Moreover $~'$ denotes derivative, see~\eqref{lemma:derivativeF}. 
\end{theorem}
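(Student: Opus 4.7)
The strategy is to recognize Sinkhorn's update, in the form~\eqref{eq:sinkhorniterationrho} given by Lemma~\ref{prop:Sinkhorndensities}, as the optimality condition~\eqref{eq:bregmangradientdescent} of mirror descent applied to $H_\nu$ with Bregman geometry given by $F$. There are essentially two facts to be verified and matched: a Legendre duality statement for $F$, converting the iterates $\phi_n$ into $(F^*)'(\rho_n)$; and a direct computation of $H'_\nu$.

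The plan is as follows. First I would observe that $F(\phi)=\langle\phi^+,\mu\rangle=\int_X \ln(\int_Y e^\phi R(dx,dy)/\mu(dx))\,\mu(dx)$ is convex as the $\mu$-average of log-sum-exp functionals (Hölder's inequality). By Lemma~\ref{lemma:derivativeF} the derivative $F'(\phi)=\rho$ is exactly the $Y$-marginal of the plan $\pi(\phi,\phi^+)$, and standard convex duality then gives $(F^*)'(\rho)=\phi$ whenever $\rho=F'(\phi)$. Applying this at both iterates yields
\[
(F^*)'(\rho_{n+1})-(F^*)'(\rho_n)=\phi_{n+1}-\phi_n.
\]

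Second, I would compute directly that $H'_\nu(\rho)=\ln(\rho/\nu)+1$. Combining this with Lemma~\ref{prop:Sinkhorndensities},
\[
\phi_{n+1}-\phi_n=-\ln(\rho_n/\nu)=-H'_\nu(\rho_n)+1,
\]
which establishes~\eqref{eq:bregmangradientdescent} modulo an additive constant.

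The main obstacle, and the reason the constant in the previous display must be dealt with carefully, is gauge invariance: $F(\phi+c)=F(\phi)+c$ so $F$ is not strictly convex, and correspondingly $F^*(\rho)=+\infty$ unless $\rho$ has total mass one, in which case $(F^*)'(\rho)$ is only defined up to an additive constant. The same ambiguity applies to $H'_\nu(\rho)$ once one restricts to mass-preserving variations $\int h\,d\rho=0$, on which the constant $+1$ integrates to zero. Thus both sides of~\eqref{eq:bregmangradientdescent} are elements of the quotient space of potentials modulo constants, and the identity holds exactly there. Writing this out carefully—verifying convexity of $F$, the Legendre duality $(F^*)'=\,(F')^{-1}$ on this quotient, and the agreement of the two derivatives modulo constants—is the bulk of the rigorous argument, after which the theorem is immediate from Lemma~\ref{prop:Sinkhorndensities}.
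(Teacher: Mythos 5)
Your proposal is correct and follows essentially the same route as the paper: identify the Sinkhorn step $\phi_{n+1}-\phi_n=-\ln(\rho_n/\nu)$ from Lemma~\ref{prop:Sinkhorndensities}, use $F'(\phi_n)=\rho_n$ from Lemma~\ref{lemma:derivativeF}, and invoke the fact that convex conjugation inverts derivatives to write $\phi_n=(F^*)'(\rho_n)$. Your additional discussion of the gauge invariance $F(\phi+c)=F(\phi)+c$ and of the additive constant in $H'_\nu(\rho)=\ln(\rho/\nu)+1$ makes explicit a point the paper's proof silently absorbs (its derivatives are effectively taken modulo constants, i.e.\ as first variations along mass-preserving perturbations), which is a legitimate sharpening but not a different argument.
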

\begin{proof}
By Lemma~\ref{prop:Sinkhorndensities} we can write a Sinkhorn step as 
\[
\phi_{n+1}-\phi_n = -H_\nu'(\rho_n).
\]
By Lemma~\ref{lemma:derivativeF} we know that $F'(\phi_n)=\rho_n$. Convex conjugation inverts derivatives, therefore $\phi_n=(F^*)'(\rho_n)$.
\end{proof}

As a consequence we derive a $O(1/n)$ convergence rate.

\begin{corollary}[Sublinear rate] \label{cor:rate}
Let $H^*(\mu,\nu,R)=\inf_{\pi\in\Pi(\mu,\nu)}H(\pi|R)$ be the value of the minimum entropy problem. Assume that $R$ has total mass $1$. Then the gradient descent formulation~\eqref{eq:bregmangradientdescent} implies decrease of the relative entropies $\kl(\rho_{n+1}|\nu) \le \kl(\rho_n|\nu)$, and a sublinear convergence rate with a robust constant,
\begin{equation}\label{eq:sublinearrate}
H(\rho_n|\nu) \le  \frac{H^*(\mu,\nu,R)}{n}
\end{equation}
for all $n\ge 1$. In particular this bound is finite (whenever there exists a solution to the minimum entropy problem).
\end{corollary}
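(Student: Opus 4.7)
My plan is to specialize the abstract result Theorem~\ref{thm:gradientdescent} to the Sinkhorn iteration using the mirror-descent representation from Theorem~\ref{thm:main}, with objective $\ent(\cdot)=\kl(\cdot|\nu)$ and movement limiter $F^*$. Since $\nu$ is the unique minimizer of $\ent$ and $\ent(\nu)=0$, parts~(i) and~(ii) of Theorem~\ref{thm:gradientdescent} will directly yield the monotone decrease and the rate $\ent(\rho_n)\le F^*(\nu|\rho_0)/n$. Two tasks then remain: verify the dominance hypothesis $\ent(\tilrho|\rho)\le F^*(\tilrho|\rho)$, and identify the initial constant $F^*(\nu|\rho_0)$ with $H^*$.

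For the dominance inequality I would first use Prop.~\ref{prop:bregmandivergence}\ref{prop:bregmandivergence:switch} to rewrite $F^*(\tilrho|\rho)$ as $F(\phi|\tilphi)$, where $F'(\phi)=\rho$ and $F'(\tilphi)=\tilrho$. By direct expansion this quantity should equal $\kl(\tilpi|\pi)$, the relative entropy between the two couplings $\tilpi=\pi(\tilphi,\tilphi^+)$ and $\pi=\pi(\phi,\phi^+)$. Since $\tilrho$ and $\rho$ are the $Y$-marginals of $\tilpi$ and $\pi$ respectively, the desired inequality $\ent(\tilrho|\rho)=\kl(\tilrho|\rho)\le \kl(\tilpi|\pi)=F^*(\tilrho|\rho)$ then follows from data processing (contraction of relative entropy under marginalization).

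For the identification of the constant, I would pick the natural initialization $\phi_0=0$, $\rho_0=F'(0)$. By Prop.~\ref{prop:bregmandivergence}\ref{prop:bregmandivergence:switch},
\[
F^*(\nu|\rho_0)=F(0|\phi^*),
\]
where $\phi^*$ is a Sinkhorn fixed point with $F'(\phi^*)=\nu$, which exists precisely when the minimum entropy problem admits a solution $\pi^*=\pi(\phi^*,\phi^{*+})$. A direct computation gives $\kl(\pi^*|R)=\bracket{\phi^*,\nu}-\bracket{\phi^{*+},\mu}=\bracket{\phi^*,\nu}-F(\phi^*)$, and expanding the Bregman divergence $F(0|\phi^*)$ then produces $F(0)+H^*(\mu,\nu,R)$. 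Finally, since $R$ has total mass one, Jensen's inequality applied to
\[
F(0)=\int_X\ln\!\Big(\int_Y R(dx,dy)/\mu(dx)\Big)\mu(dx)
\]
yields $F(0)\le\ln R(X\times Y)=0$, so $F^*(\nu|\rho_0)\le H^*$, and the stated rate follows.

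The hard part will be the dominance inequality: it is the structural property placing Sinkhorn into the gradient-descent framework of Theorem~\ref{thm:gradientdescent} in the strong, movement-limited sense, and while it ultimately reduces to data processing for relative entropy, pinning down the identification $F^*(\tilrho|\rho)=\kl(\tilpi|\pi)$ requires the explicit coupling-level computation indicated above. The remaining ingredients---the primal-dual formula for $\kl(\pi^*|R)$, the expansion of the Bregman divergence, and Jensen's inequality---are routine.
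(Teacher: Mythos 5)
Your proposal is correct and follows essentially the same route as the paper: Theorem~\ref{thm:gradientdescent} applied with objective $\ent$ and movement limiter $F^*$, the dominance inequality obtained from $F^*(\tilrho|\rho)=F(\phi|\tilphi)=\kl(\tilpi|\pi)$ plus data processing (exactly the paper's Lemmas~\ref{lemma:bregmandivergenceFstar} and~\ref{lemma:movementlimiterbound}), and the initialization $\phi_0=0$ to identify the constant. Your potential-level computation $F^*(\nu|\rho_0)=F(0|\phi^*)=F(0)+H^*$ with $F(0)=-H(\mu|\bar{\mu})\le 0$ by Jensen is the same calculation the paper carries out on the coupling side, namely $F^*(\nu|\rho_0)=H(\pi^*|\pi_0)=H^*-H(\mu|\bar{\mu})\le H^*$, the two being identified by Lemma~\ref{lemma:bregmandivergenceFstar}.
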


Before proving Corollary~\ref{cor:rate} let us point out that general measures $R$ which don't necessarily sum up to $1$ can be handled. In that case~\eqref{eq:sublinearrate} should be replaced by
\begin{equation}\label{eq:sublinearrategeneralR}
H(\rho_n|\nu) \le  \frac{H^*(\mu,\nu,R)+\ln\big(\iint R\big)}{n}.
\end{equation}
In fact a slightly stronger bound valid for any measure $R$ is implied by the gradient descent viewpoint, namely
\[
H(\rho_n|\nu) \le  \frac{H^*(\mu,\nu,R)-H(\mu|\bar{\mu})}{n},
\]
where $\bar{\mu}$ is the $X$-marginal of $R$. Note that when $\bar{\mu}$ is not a probability measure the entropy $H(\mu|\bar{\mu})$ can be positive or negative.

The proof of Corollary~\ref{cor:rate} relies on two lemmas. The first one says that the movement limiter based on $F^*$ can be expressed as a relative entropy over couplings.

\begin{lemma}\label{lemma:bregmandivergenceFstar}
Let $\phi$ and $\tilphi$ be two potentials defined over $Y$. Denote $\pi=\pi(\phi,\phi^+)$, $\tilpi=\pi(\tilphi,\tilphi^+)$, and set the $Y$-marginals $\rho=p_Y\pi$ and $\tilrho=p_Y\tilpi$. Then
\[
F(\phi|\tilphi)=F^*(\tilrho|\rho) = H(\tilpi|\pi),
\]
with $H(\tilpi|\pi)=\iint\ln(\tilpi/\pi)\,\tilpi$.
\end{lemma}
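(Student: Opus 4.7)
The first equality, $F(\phi|\tilphi) = F^*(\tilrho|\rho)$, is a direct application of Proposition~\ref{prop:bregmandivergence}\ref{prop:bregmandivergence:switch}, once we recall from Lemma~\ref{lemma:derivativeF} that $F'(\phi) = \rho$ and $F'(\tilphi) = \tilrho$. So the real content is the identification $F(\phi|\tilphi) = H(\tilpi|\pi)$.

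My plan is to compute both sides explicitly and match them. On the one hand, unfolding the definition of the Bregman divergence of $F$ together with $F(\phi) = \bracket{\phi^+, \mu}$ and $F'(\tilphi) = \tilrho$ gives
\[
F(\phi|\tilphi) = \bracket{\phi^+ - \tilphi^+, \mu} - \bracket{\tilrho, \phi - \tilphi}.
\]
On the other hand, using the explicit formula $\pi(dx,dy) = e^{\phi(y) - \phi^+(x)} R(dx,dy)$ (and likewise for $\tilpi$), the log-ratio inside $H(\tilpi|\pi)$ becomes a sum of four terms linear in the potentials:
\[
\ln(\tilpi/\pi)(x,y) = \bigl(\tilphi(y) - \phi(y)\bigr) - \bigl(\tilphi^+(x) - \phi^+(x)\bigr).
\]

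The key observation is that $\tilpi$ has $Y$-marginal $\tilrho$ and $X$-marginal $\mu$ (the latter is the content of the last sentence in Lemma~\ref{lemma:derivativeF}). This lets me integrate each of the four terms against $\tilpi$ separately, turning the double integral into two single integrals and producing exactly $\bracket{\tilphi - \phi, \tilrho} + \bracket{\phi^+ - \tilphi^+, \mu}$, matching the expression above for $F(\phi|\tilphi)$.

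There is no real obstacle here; the proof is a direct verification. The only point requiring a bit of care is making sure that both couplings $\pi$ and $\tilpi$ have $X$-marginal exactly $\mu$ (so that the $\phi^+$ and $\tilphi^+$ terms integrate against the same measure), which is precisely what the construction via the ``$+$''-transform guarantees and why the normalizing factor $Z$ equals $1$ in the definition of $\pi(\phi,\phi^+)$.
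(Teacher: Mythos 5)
Your proof is correct and follows essentially the same route as the paper: the first equality via Proposition~\ref{prop:bregmandivergence}\ref{prop:bregmandivergence:switch} and Lemma~\ref{lemma:derivativeF}, and the second by expanding $\ln(\tilpi/\pi)$ into the potential differences and integrating against $\tilpi$, using that its marginals are $\tilrho$ and $\mu$. The only cosmetic difference is that you unfold the Bregman divergence $F(\phi|\tilphi)$ first and then match, whereas the paper computes $H(\tilpi|\pi)$ and recognizes the Bregman divergence at the end --- the underlying computation is identical.
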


\begin{proof}
Let $\phi$ and $\tilphi$ be two potentials on $Y$, and denote $\pi$ and $\tilpi$ the corresponding couplings, as well as $\rho$ and $\tilrho$ the corresponding probability measures on $Y$. 

Firstly, the identity $F(\phi|\tilphi)=F^*(\tilrho|\rho)$ is a general property of Bregman divergences, see~Prop.~\ref{prop:bregmandivergence}\ref{prop:bregmandivergence:switch}. Here it follows from Lemma~\ref{lemma:derivativeF} which says that $\rho=F'(\phi)$ and $\tilrho=F'(\tilphi)$.

Secondly, we prove that $F(\phi|\tilphi)=\kl(\tilpi|\pi)$. We write
\[
\kl(\tilpi|\pi) = \iint\ln\Big(\frac{\tilpi}{\pi}\Big)\,\tilpi.
\]
Using the expression $\pi(dx,dy)=e^{\phi(y)-\phi^+(x)}\,R(dx,dy)$ and the corresponding one for $\tilpi$ we obtain
\begin{align*}
\kl(\tilpi|\pi) &= \iint \Big[\big(\tilphi(y)-\phi(y)\big) - \big(\tilphi^+(x)-\phi^+(x)\big)\Big]\,\tilpi(dx,dy) \\
 &= \bracket{\tilphi-\phi, \tilrho} - \bracket{\tilphi^+-\phi^+, \mu},
\end{align*}
since the $X$-marginal of the couplings $\pi(\phi,\phi^+)$ we construct is always $\mu$. Continuing, 
\begin{align*}
\kl(\tilpi|\pi) &= \bracket{\tilphi-\phi, F'(\tilphi)} - F(\tilphi) + F(\phi) \\
&= F(\phi|\tilphi).
\end{align*}
\end{proof}

The next lemma says that the objective function is bounded in a convex sense by the movement limiter (it is ``$1$-smooth'' in the language of first-order optimization).

\begin{lemma}\label{lemma:movementlimiterbound}
For all probability measures $\rho, \tilrho$ on $Y$,
\[
H_{\nu}(\tilrho|\rho) \le F^*(\tilrho|\rho),
\]
where $H_{\nu}(\rho) = H(\rho|\nu)$.
\end{lemma}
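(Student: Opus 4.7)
The plan is to reduce both sides of the inequality to relative entropies over couplings and then invoke the data processing (monotonicity) property of $H$ under marginal projection.

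First I would compute the Bregman divergence $H_\nu(\tilrho|\rho)$ explicitly. Since $H_\nu'(\rho)=\ln(\rho/\nu)+1$, plugging into the definition
\[
H_\nu(\tilrho|\rho)=H_\nu(\tilrho)-H_\nu(\rho)-\bracket{\ln(\rho/\nu)+1,\tilrho-\rho}
\]
and using that $\rho$ and $\tilrho$ are both probability measures (so the constant $1$ integrates to zero against $\tilrho-\rho$), a one-line computation gives $H_\nu(\tilrho|\rho)=H(\tilrho|\rho)=\int\ln(\tilrho/\rho)\,\tilrho$. So the Bregman divergence of relative entropy is relative entropy itself, when restricted to probabilities.

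Next I would rewrite the right-hand side using Lemma~\ref{lemma:bregmandivergenceFstar}. Writing $\rho=F'(\phi)$ and $\tilrho=F'(\tilphi)$, and denoting $\pi=\pi(\phi,\phi^+)$ and $\tilpi=\pi(\tilphi,\tilphi^+)$, the lemma gives $F^*(\tilrho|\rho)=H(\tilpi|\pi)$. The inequality to prove thus becomes
\[
H(\tilrho|\rho)\le H(\tilpi|\pi),
\]
where $\rho=p_Y\pi$ and $\tilrho=p_Y\tilpi$.

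This last inequality is the data processing inequality for relative entropy applied to the $Y$-marginal projection. The cleanest way to conclude is the chain rule:
\[
H(\tilpi|\pi)=H(\tilrho|\rho)+\int_Y H\big(\tilpi(\cdot|y)\big|\pi(\cdot|y)\big)\,\tilrho(dy),
\]
where $\pi(\cdot|y)$ and $\tilpi(\cdot|y)$ are the conditional distributions on $X$ given $y$. Nonnegativity of the conditional relative entropies yields the bound. The main obstacle is a verification point rather than a conceptual one: one must check that the disintegration is well defined in this generality and that the chain rule holds for the measures at hand — for couplings with a common marginal structure this is standard, and it is the step where the full strength of the Sinkhorn-type parametrization $\pi=\pi(\phi,\phi^+)$ is not actually needed beyond providing the two couplings whose marginals are $\rho$ and $\tilrho$.
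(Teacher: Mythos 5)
Your proof is correct and takes essentially the same route as the paper: rewrite $F^*(\tilrho|\rho)$ as $H(\tilpi|\pi)$ via Lemma~\ref{lemma:bregmandivergenceFstar}, identify the Bregman divergence $H_{\nu}(\tilrho|\rho)$ with the plain relative entropy $H(\tilrho|\rho)$, and conclude by monotonicity of relative entropy under the marginal projection $p_Y$. The only differences are in bookkeeping: you verify $H_{\nu}(\tilrho|\rho)=H(\tilrho|\rho)$ by direct computation where the paper cites Prop.~\ref{prop:bregmandivergence}\ref{prop:bregmandivergence:chainrule}, and you justify the data processing step by the disintegration chain rule where the paper invokes Jensen's inequality and leaves it as an exercise.
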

\begin{proof}

To show this, first use Lemma~\ref{lemma:bregmandivergenceFstar} to write
\[
F^*(\tilrho|\rho) =  \kl(\tilpi|\pi),
\]
where $\pi$ and $\tilpi$ are defined in accordance with Lemma~\ref{lemma:bregmandivergenceFstar}. Then we use a property of the relative entropy (true more generally for $f$-divergences)  that relative entropy decreases when taking marginals, thus
\[
\kl(\tilpi|\pi) \ge \kl(p_Y\tilpi|p_Y\pi) .
\]
This property is a simple consequence of Jensen's inequality and is left as an exercise to the reader. We have obtained
\[
F^*(\tilrho|\rho) \ge  \kl (\tilrho|\rho).
\]
To conclude we use Prop.~\ref{prop:bregmandivergence}\ref{prop:bregmandivergence:chainrule} to say that $\kl(\tilrho|\rho)=H_{\nu}(\tilrho|\rho)$.
\end{proof}

We are now able to prove the convergence rate.

\begin{proof}[Proof of Corollary~\ref{cor:rate}]
The crucial ingredient needed to derive a $O(1/n)$ convergence rate for a gradient descent scheme is showing that the movement limiter dominates (in a convex sense) the objective function. We refer to Theorem~\ref{thm:gradientdescent} in Section~\ref{sec:gradientdescent} for a precise statement. For the problem at hand, this is precisely the content of Lemma~\ref{lemma:movementlimiterbound}, $H_\nu(\tilde{\rho}|\rho) \le F^*(\tilde\rho|\rho)$. Thus we immediately obtain
\[
H(\rho_n|\nu) \le \frac{F^*(\nu|\rho_0)}{n},
\]
for all $n\ge 1$. We have therefore derived the desired $O(1/n)$ convergence rate. We would now like to obtain a more tractable inequality. To this end, assume that the initial iterate $\phi_0$ is identically zero. Let $\pi_0$ be the coupling associated to $\rho_0$ and let $\pi^*$ be the coupling associated to $\nu$, i.e. $\pi^*$ is the minimizer to the entropic problem $\inf_{\pi} H(\pi|R)$ (we assume in this paper that the minimizer exists). By Lemma~\ref{lemma:bregmandivergenceFstar} we know that $F^*(\nu|\rho_0)=H(\pi^*|\pi_0)$. Denote $\psi_0=(\phi_0)^+$; then $\pi_0(dx,dy) = e^{\phi_0(y)-\psi_0(x)}R(dx,dy)$ and we have
\begin{align*}
H(\pi^*|\pi_0) &= \iint \ln\Big(\frac{\pi^*}{\pi_0}\Big)\,\pi^*\\
 &=\iint \ln\Big(\frac{\pi^*}{R}\Big)\,\pi^* - \iint \phi_0 \,\pi^* + \iint \psi_0 \,\pi^* \\ 
 &= H(\pi^*|R) - \bracket{\phi_0,\nu} + \bracket{\psi_0,\mu}.
\end{align*}
Since we assume that $\phi_0=0$ the second term cancels, and the third term is 
\[
\bracket{\psi_0,\mu} = \int \ln\Big(\int e^0 R/\mu\Big)\mu = -H(\mu|\bar{\mu}),
\]
where $\bar{\mu}$ is the $X$-marginal of $R$. 
Therefore $H(\pi^*|\pi_0) = H(\pi^*|R) - H(\mu|\bar{\mu})$. If $R$ has total mass $1$ then so does its marginal $\bar{\mu}$, which implies that the relative entropy $H(\mu|\bar{\mu})$ is nonnegative. Thus $H(\pi^*|\pi_0) \le H(\pi^*|R)$ which concludes the proof.

\end{proof}

We develop below discussions and examples related to these results.

\subsubsection*{Discussion on the gradient descent formulation}
A short introduction on Bregman divergences and gradient descent methods is contained in Section~\ref{sec:background}. 

Our gradient descent perspective in Theorem~\ref{thm:main} shifts the focus of the Sinkhorn method from potentials to probability measures. It is based on the ``semi-dual'' formulation~\eqref{eq:sinkhorniterationrho} which eliminates one of the two potentials (here $\psi$) and provides a description of the Sinkhorn algorithm based only on $Y$-variables ($\phi$ and $\rho$). By symmetry it is possible of course to state an analogue of Theorem~\ref{thm:main} using instead variables defined on $X$.

A rather nonstandard aspect of the theorem's gradient scheme is the movement limiter based on $F^*$. First recall from Section~\ref{sec:gradientdescent} that the gradient descent update~\eqref{eq:bregmangradientdescent} admits the variational formulation
\[
\rho_{n+1}=\argmin_{\rho}H_{\nu}(\rho_n)+\bracket{H_{\nu}'(\rho_n),\rho-\rho_n} + F^*(\rho|\rho_n),
\]
which highlights the form of movement limiter $F^*(\rho|\rho_n)$: a Bregman divergence based on the function $F^*$. Here $F^*$ is specific to the the problem at hand; from the optimization point of view it is natural to have movement limiters well-adapted to the objective function. The result of Lemma~\ref{lemma:bregmandivergenceFstar} might shed some light on this Bregman divergence by expressing it as a relative entropy (Kullback--Leibler divergence) of the corresponding couplings.

A benefit of a gradient descent framework is that obtaining a convergence rate becomes a clearly defined problem: the movement limiter should dominate (in the convex sense) the objective function. Here it means roughly speaking obtaining the inequality over Hessians
\[
\ent'' \le (F^*)''.
\]
(Note that we don't actually need these functions to be twice-differentiable).  This is proven in Lemma~\ref{lemma:movementlimiterbound} and relies on the following simple fact: the relative entropy decreases when taking marginals, thus
\[
\kl(\tilpi|\pi) \ge \kl(p_Y\tilpi|p_Y\pi) .
\]

\subsubsection*{Discussion on the convergence rate}

The strength of our convergence rate
\[
H(\rho_n|\nu) \le \frac{H^*}{n}
\]
lies in the robust constant $H^*$ rather than its sublinear nature, since \emph{linear} rates are well-known to exist (as discussed in the next paragraph). Indeed, the constant $H^*=H^*(\mu,\nu,R)$ is finite as soon as the feasibility set of the entropic problem~\eqref{eq:entropicminimization} is non-empty. In other words, when there is a solution then the convergence rate $H^* / n$ holds. For instance, this allows to deal with reference measures $R$ with zero entries. To the best of our knowledge this improves on all the known global rates for the Sinkhorn algorithm which are sensitive to zero entries of $R$.

A classical approach to obtain rates on the convergence of the Sinkhorn method is to use the Hilbert projective metric~\cite{franklinlorenz}. Then one can derive linear convergence rates of the form $\kl(\rho_n|\nu) \lesssim \lambda^{-n} $ for some $\lambda\in (0,1]$; however the constant $\lambda$ can be weak in practice. We refer to~\cite{peyrecuturibook} for precise formulas but let us point out that $\lambda\to 1$ as $\min_{ij}R_{ij}\to 0$.

More recently, a series of work~\cite{altschuler,chakrabarty,dvurechensky} have derived sublinear estimates for the relative entropy $H(\nu|\rho_n)$ in the same spirit as our convergence rate. In these works is proven, roughly speaking, that $O(1/\delta)$ iterations are needed to obtain an accuracy of $\delta>0$, measured in a KL divergence. The convergence rate obtained from our gradient descent viewpoint improves on these estimates on two fronts. First we obtain that the quantities $\kl(\rho_n|\nu)$ decrease as $n$ grows, as well as a true inequality $\kl(\rho_n|\nu) \le \frac{H^*}{n}$. Second the constants appearing in the literature slightly differ from one another but all have in common a $-\ln(\min_{ij}R_{ij})$ term which blows up as $\min_{ij}R_{ij}\to 0$ (their setting is finite-dimensional so that $R$ is a matrix with entries $R_{ij}$).

We now present some examples which allow a more explicit bound on the rate constant $H^*$. 

\begin{example}[Regularization of quadratic optimal transport]\label{example:quadot}
Take $X=Y=\Rd$ and let $\mu$ and $\nu$ be two probability measures on $\Rd$ with finite second moments, 
\[
M_2(\mu)=\int \abs{x}^2\,\mu(dx) < \infty, \quad M_2(\nu)=\int \abs{y}^2\,\nu(dy) < \infty.
\]
Fix $\eps >0$ and consider the problem 
\[
E^*=\inf_{\pi} \iint \frac{1}{2}\abs{x-y}^2\,\pi(dx,dy) + \eps \iint \ln\Big(\frac{\pi(dx,dy)}{\mu(dx)\nu(dy)}\Big)\,\pi(dx,dy).
\]
As usual the infimum runs over couplings $\pi\in\Pi(\mu,\nu)$. To fit into the framework of this paper define $R(dx,dy)=e^{-\frac{\abs{x-y}^2}{2\eps}}\mu(dx)\nu(dy)$. Then $E^*=\eps\inf_{\pi} H(\pi|R)$. The simple upper bound $E^*\le M_2(\mu)+M_2(\nu)$ can be obtained with $\pi(dx,dy)=\mu(dx)\nu(dy)$. Also note that the total mass of $R$ satisfies $\ln\big(\iint R\big) \le 0$. Using the general form~\eqref{eq:sublinearrategeneralR} of our main result we obtain the Sinkhorn convergence rate
\begin{equation}
H(\rho_n|\nu) \le \frac{M_2(\mu)+M_2(\nu)}{n \,\eps },    
\end{equation}
for all $n\ge 1$.

Often one is interested in the limit $\eps\to 0$. Then the above inequality provides a $O\Big(\frac{1}{n\eps}\Big)$ bound which can be compared to the $O\big((1-e^{-1/\eps})^{2n}\big)$ bound from the Hilbert metric theory~\cite{peyrecuturibook}. For instance, with $\eps=10^{-4}$, assuming all the other constants are $O(1)$, one can guarantee an accuracy $H(\rho_n|\nu)<10^{-3}$ in
\begin{itemize}[\textendash]
    \item $n\sim 10^7$ iterations with our $O\Big(\frac{1}{n\eps}\Big)$ bound; and
    \item $n\sim e^{(10^4)}$ iterations with a $O\big((1-e^{-1/\eps})^{2n}\big)$ bound.
\end{itemize}
\end{example}

\begin{example}[Entropic Talagrand inequality]\label{example:talagrand}
Take $X=Y=\Rd$, let $\mu$ and $\nu$ be two probability measures absolutely continuous with respect to the Lebesgue measure and let $R$ be the joint measure at times $0$ and $T$ associated with the SDE 
\[
dX_t=-\nabla U(X_t)\,dt + dW_t, \quad X_0\sim m(dx):=e^{-U(x)}\,dx.
\]
We assume that the potential energy $U$ is normalized and satisfies the strong convexity bound $D^2U(x) \ge \lambda I$ for some $\lambda>0$. Here $I$ denotes the $d\times d$ identity matrix. One can have in mind for instance the Ornstein--Uhlenbeck process corresponding to $U(x)=\frac{\lambda}{2}\abs{x}^2$. 

In this setting we can obtain more precise bounds for our Sinkhorn convergence rate~\eqref{eq:sublinearrate} by using recent results in~\cite{conforti,confortitamanini}.  These works provide an entropic version of the Talagrand inequality from optimal transport. Specifically, they obtain the following bound on the entropic cost: $H^*(\mu,\nu,R) \le \frac{H(\mu|m) + H(\nu|m)}{1-e^{-\lambda T}}$, where $m(dx)=e^{-U(x)}\,dx$. For our purposes, this implies the Sinkhorn convergence rate
\[
H(\rho_n|\nu) \le \frac{H(\mu|m) + H(\nu|m)}{ n \,(1-e^{-\lambda T}) }.
\]
\end{example}

\section*{Acknowledgements}
The author is grateful to Gabriel Peyré for helpful discussions.

\bibliographystyle{amsalphaurl}
\bibliography{sinkhorn}

\end{document}